\newcommand{\bbC}{\mathbb{C}}
\newcommand{\bbR}{\mathbb{R}}
\newcommand{\calL}{\mathcal{L}}
\newcommand{\calR}{\mathcal{R}}
\newcommand{\re}{\operatorname{Re}}
\theoremstyle{definition}
\newtheorem{definition}{Definition}[section]
\newtheorem{notation}[definition]{Notation}
\newtheorem{remark}[definition]{Remark}
\newtheorem{remarks}[definition]{Remarks}
\theoremstyle{plain}
\newtheorem{proposition}[definition]{Proposition}
\newtheorem{lemma}[definition]{Lemma}
\newtheorem{theorem}[definition]{Theorem}
\numberwithin{equation}{section}
\numberwithin{figure}{section}
\begin{document}

\title{A note on approximation of operator semigroups}
\author{Jochen Gl\"uck}
\email{jochen.glueck@uni-ulm.de}
\address{Jochen Gl\"uck, Institute of Applied Analysis, Ulm University, 89069 Ulm, Germany}
\date{January 26, 2015}
\begin{abstract}
	Let $A$ be a bounded linear operator and $P$ a bounded linear projection on a Banach space $X$. We show that the operator semigroup $(e^{t(A-kP)})_{t \ge 0}$ converges to a semigroup on a subspace of $X$ as $k \to \infty$ and we compute the limit semigroup.
\end{abstract}
\keywords{absorption semigroup; degenerate semigroup; projection; bounded generator; resolvent estimates; contour integration}
\maketitle

\section{Introduction and discussion of the main result}

To motivate the content of this note, let $(\Omega, \Sigma, \mu)$ be a $\sigma$-finite measure space and $(e^{tA})_{t \ge 0}$ a positive $C_0$-semigroup on $X := L^p(\Omega,\Sigma,\mu)$ for some $p \in [1,\infty)$. If $B \subset \Omega$ is a measurable set, if $P$ is the projection on $X$ which is given by multiplication with the indicator function $\mathbbm{1}_B$ and if $Q := 1-P$, then for all $t > 0$ the limits
\begin{align}
	\lim_{k\to \infty} e^{t(A-kP)} \qquad \text{and} \qquad \lim_{k \to \infty} \big( e^{\frac{t}{k}A}Q \big)^k \label{form:absorption-for-positive-semigroups}
\end{align}
exist with respect to the strong operator topology and coincide \cite[Lemma~4.1 and Theorem~5.3]{Arendt1993}. The limit can be shown to be a certain degenerate operator semigroup \cite[p.~431--432]{Arendt1993} which we might refer to as a sort of \emph{absorption semigroup}. Usually, the latter term is used to describe the semigroup governed by an abstract Cauchy problem which models a diffusion process with a potential (= absorption term). For the connection of such absorption semigroups to the limit semigroup in~(\ref{form:absorption-for-positive-semigroups}) we refer the reader to \cite{Arendt1993}; see also \cite{Voigt1986, Voigt1988} for some additional background information. 

When considering the above convergence result, a number of potential generalisations immediately comes into ones mind; for example, one could try to replace the projection $P$, which is given by multiplication with an indicator function, by a more general projection. One could also try to consider more general spaces and/or to omit the positivity assumption. We refer to \cite{Arendt1997, Matolcsi2003, Matolcsi2003a, Matolcsi2004} and the references therein for a number of positive and negative results in this direction. It is also worthwhile pointing out that the second limit in~(\ref{form:absorption-for-positive-semigroups}) is closely related to the so-called \emph{Quantum Zeno Effect}; we refer to \cite{Exner2007, Facchi2010} and the references therein for more details.

In this note we are concerned with the existence of the limits in~(\ref{form:absorption-for-positive-semigroups}) in the case where $A$ is a \emph{bounded} linear operator (without any additional properties) on an arbitrary Banach space and where $P$ is an arbitrary bounded linear projection. For this case, the second limit in~(\ref{form:absorption-for-positive-semigroups}) was shown by Matolcsi and Shvidkoy to always exist with respect to the strong operator topology; moreover they identified the limit semigroup and thus proved the following theorem \cite[Theorem~1]{Matolcsi2003}:

\begin{theorem}[Matolcsi, Shvidkoy] \label{thm:matolcsi-shvidkoy}
	Let $A$ be a bounded linear operator and $Q$ a bounded linear projection on a Banach space $X$. For each $x \in X$ and each $t \ge 0$ we have 
	\begin{align*}
		\big( e^{\frac{t}{k}} Q \big)^kx \to e^{tQAQ}Qx \quad \text{as } k \to \infty,
	\end{align*}
	where the convergence is uniform with respect to $t$ on bounded subsets of $[0,\infty)$.
\end{theorem}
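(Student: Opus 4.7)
The plan is to separate the outermost factor from a sandwiched middle. Writing $U_k := e^{(t/k)A}$, I would use the idempotency $Q^2 = Q$ to verify by a short induction that
\[
    (U_kQ)^k = U_k\, R_k^{k-1}, \qquad R_k := QU_kQ.
\]
Since $U_k \to I$ in operator norm uniformly for $t$ in bounded intervals, the theorem reduces to proving $R_k^{k-1}x \to e^{tQAQ}Qx$ uniformly on any $[0,T]$.

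To analyse $R_k^{k-1}$, I would compare $R_k$ with an idealised first-order approximation. Taylor expansion yields $R_k = \tilde R_k + F_k$, where $\tilde R_k := Q + \tfrac{t}{k}\,QAQ$ and $\|F_k\| \le \|Q\|^2\, \tfrac{t^2\|A\|^2}{2k^2}\, e^{t\|A\|/k}$. Setting $B := tQAQ$, the identities $QB = BQ = B$ imply that both $R_k$ and $\tilde R_k$ annihilate $\ker Q = (I-Q)X$ and map into $QX$; moreover, and this is the crucial structural point, the restrictions to $QX$ satisfy $\|R_k|_{QX}\|,\ \|\tilde R_k|_{QX}\| \le 1 + O(1/k)$ uniformly in $t \in [0,T]$, even though the full operator norms $\|R_k\|,\ \|\tilde R_k\|$ can exceed $1$ when $\|Q\| > 1$. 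Consequently, all their powers up to order $k-1$ remain bounded by a constant depending only on $T$, $\|A\|$, and $\|Q\|$.

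Now apply the telescoping identity
\[
    R_k^{k-1} - \tilde R_k^{k-1} = \sum_{j=0}^{k-2} R_k^{\,j}\, F_k\, \tilde R_k^{\,k-2-j}.
\]
Because $F_k$ has image in $QX$ and $R_k$ leaves $QX$ invariant, each summand can be bounded using only the $QX$-norms described above (with one extra factor of $\|\tilde R_k\|$ absorbed on the right when $k-2-j \ge 1$), so
\[
    \|R_k^{k-1} - \tilde R_k^{k-1}\| \le (k-1)\, C(T)\, \|F_k\| = O(1/k),
\]
uniformly in $t \in [0,T]$. For the idealised power, $\tilde R_k$ acts on $QX$ as $I + B/k$ and annihilates $(I-Q)X$, so $\tilde R_k^{k-1}x = (I+B/k)^{k-1}Qx \to e^B Qx = e^{tQAQ}Qx$, uniformly on $[0,T]$ since $\|B\|$ is bounded there. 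Combining this with $(U_kQ)^k = U_k R_k^{k-1}$ finishes the proof.

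The main obstacle is precisely the norm control in the telescoping step: a naive bound $(k-1)\,\max(\|R_k\|,\|\tilde R_k\|)^{k-2}\|F_k\|$ blows up geometrically whenever $\|Q\|>1$, which is the generic situation for oblique (non-orthogonal) projections. The key technical observation that saves the argument is the $R_k$- and $\tilde R_k$-invariance of the range of $Q$, where both operators are only an $O(1/k)$ perturbation of the identity, making the telescoping sum of $k$ terms still converge to zero.
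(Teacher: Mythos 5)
The paper does not prove Theorem~\ref{thm:matolcsi-shvidkoy}; it is quoted directly from Matolcsi and Shvidkoy [Matolcsi2003, Theorem~1], so there is no in-paper proof to compare against. What the paper does provide is Theorem~\ref{thm:lifting-to-operator-norm-convergence}, which upgrades the strong convergence to operator-norm convergence by taking [Matolcsi2003] as a black box and lifting via the isometric embedding $\calL(X)\ni B\mapsto\hat B\in\calL(\ell^\infty(\overline B_X;X))$. Your argument is a genuinely different, self-contained proof, and it is essentially correct: the identity $(U_kQ)^k=U_k R_k^{k-1}$ (which in fact holds only for $k\ge 2$; for $k=1$ the right side is $U_k$, not $U_kQ$, but this is irrelevant as $k\to\infty$), the observation that both $R_k$ and $\tilde R_k$ map $X$ into $QX$, kill $(I-Q)X$, and have $QX$-restricted norm $1+O(1/k)$, hence powers up to order $k$ stay bounded by a constant $\approx\|Q\|e^{c(T)}$, and the telescoping bound $\|R_k^{k-1}-\tilde R_k^{k-1}\|\le(k-1)C(T)\|F_k\|=O(1/k)$ together with the Euler scheme $(\tilde R_k|_{QX})^{k-1}\to e^{tQAQ|_{QX}}$ all check out. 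Notably your proof already yields operator-norm convergence uniformly on $[0,T]$, i.e.\ it directly gives the conclusion of Theorem~\ref{thm:lifting-to-operator-norm-convergence}; so it replaces both the cited theorem and the paper's lifting trick at once. The trade-off is that the paper's route is shorter given the citation is allowed, whereas yours reproves everything from scratch with elementary Taylor/telescoping estimates. The one point worth making airtight in a write-up is the $k=1$ caveat in the identity and perhaps a line explaining why $\|R_k^j\|\le\|Q\|\,\|R_k^j|_{QX}\|$ (namely $R_k^j=R_k^jQ$ for $j\ge1$), which is the crux that makes the telescoping sum harmless despite $\|Q\|>1$.
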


As the generator $A$ is bounded it is natural to ask whether the limit exists even in the operator norm topology. In Section~\ref{section:lifting-to-operator-norm-convergence} at the end of this note we will prove that this is indeed true. Our focus, however, is on another question, namely whether the limit $\lim_{k \to \infty} e^{t(A-kP)}$ always exists if $A$ is bounded. Our main result gives an affirmative answer to this question:

\begin{theorem} \label{thm:main-theorem}
	Let $A$ be a bounded linear operator and $P$ a bounded linear projection on a complex Banach space $X$. Define $Q := 1 - P$ and let $z \in \bbC$. For all $t > 0$ we have
	\begin{align*}
		e^{t(A+zP)} \to e^{tQAQ}Q \quad \text{as} \quad \re z \to -\infty
	\end{align*}
	with respect to the operator norm, and the convergence is uniform with respect to $t$ on compact subsets of $(0,\infty)$.
\end{theorem}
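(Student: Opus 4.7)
The plan is to express $e^{t(A+zP)}$ by the Riesz--Dunford formula $\frac{1}{2\pi i}\oint_\Gamma e^{\lambda t}R(\lambda,A+zP)\,d\lambda$ and to split the contour $\Gamma$ into two pieces that follow the two components into which the spectrum of $A+zP$ disintegrates as $\re z\to-\infty$. With respect to the decomposition $X=PX\oplus QX$, write every operator as a $2\times 2$ block matrix; then
\begin{align*}
A+zP=\begin{pmatrix}A_{11}+z&A_{10}\\A_{01}&A_{00}\end{pmatrix}\qquad\text{and}\qquad QAQ=\begin{pmatrix}0&0\\0&A_{00}\end{pmatrix},
\end{align*}
where $A_{ij}$ denotes the corresponding block of $A$.

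Fix $r,\rho>\|A\|$ and let $D_1:=\{|\lambda|\le r\}$ and $D_2:=\{|\lambda-z|\le\rho\}$, which are disjoint for $\re z$ sufficiently negative. Applying the Schur-complement formula to $\lambda-(A+zP)$, first with pivot $M(\lambda):=\lambda-z-A_{11}$ and then with pivot $T(\lambda):=\lambda-A_{00}$, one obtains two explicit expressions for $R(\lambda,A+zP)$. The estimates $\|M(\lambda)^{-1}\|=O(1/|z|)$ for $\lambda\in D_1$ and $\|T(\lambda)^{-1}\|=O(1/|z|)$ for $\lambda\in D_2$ (valid for $|\re z|$ large), together with a Neumann-series argument on the complement of $D_1\cup D_2$, show that $\sigma(A+zP)\subset\operatorname{int}D_1\cup\operatorname{int}D_2$ once $|\re z|$ is large enough. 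Setting $\Gamma_j:=\partial D_j$, the Riesz projections $E_j:=\frac{1}{2\pi i}\oint_{\Gamma_j}R(\lambda,A+zP)\,d\lambda$ satisfy $E_1+E_2=I$ and
\begin{align*}
e^{t(A+zP)}=\frac{1}{2\pi i}\oint_{\Gamma_1}e^{\lambda t}R(\lambda,A+zP)\,d\lambda+\frac{1}{2\pi i}\oint_{\Gamma_2}e^{\lambda t}R(\lambda,A+zP)\,d\lambda.
\end{align*}

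For the $\Gamma_1$-integral, the first Schur expansion shows that on $\Gamma_1$ every block of $R(\lambda,A+zP)$ except the one mapping $QX$ into $QX$ is $O(1/|z|)$, while that remaining block $S(\lambda)^{-1}:=(\lambda-A_{00}-A_{01}M(\lambda)^{-1}A_{10})^{-1}$ converges uniformly in $\lambda\in\Gamma_1$ to $(\lambda-A_{00})^{-1}$; hence $R(\lambda,A+zP)\to R(\lambda,QAQ)Q$ uniformly on $\Gamma_1$. Since $\Gamma_1$ encloses $\sigma(QAQ)$, the $\Gamma_1$-integral tends in operator norm to $e^{tQAQ}Q$, uniformly in $t$ on compact subsets of $\bbR$. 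For the $\Gamma_2$-integral, the second Schur expansion yields an operator-norm bound $\|R(\lambda,A+zP)\|\le C$ uniform on $\Gamma_2$ (independent of $z$ for $|\re z|$ large), while $|e^{\lambda t}|\le e^{t(\re z+\rho)}$ there; the integral is therefore bounded in norm by $\rho C\,e^{t(\re z+\rho)}\to 0$ as $\re z\to-\infty$, uniformly for $t$ in every compact subset of $(0,\infty)$.

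The principal technical obstacle is the uniform resolvent bound on $\Gamma_2$: although $\|A+zP\|\to\infty$, the $PX\to PX$ block of $R(\lambda,A+zP)$ on $\Gamma_2$ stays only $O(1)$ rather than shrinking with $z$, so the entire gain of the second integral must come from the smallness of $|e^{\lambda t}|$. This is also the reason why the stated convergence cannot be uniform down to $t=0$.
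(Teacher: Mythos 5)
Your high-level strategy is the same as the paper's: represent $e^{t(A+zP)}$ by a Dunford integral, split the contour into a piece $\Gamma_1$ around $0$ and a piece $\Gamma_2$ around $z$, kill the $\Gamma_2$-integral with the exponential decay of $e^{\lambda t}$ for $t>0$ and $\re\lambda\approx\re z\to-\infty$, and show the $\Gamma_1$-integral converges because $\calR(\lambda,A+zP)\to\calR(\lambda,QAQ)Q$ uniformly on $\Gamma_1$. Where you diverge is in the technical machinery: the paper works directly with the explicit resolvent formula for $zP$ (Lemma~\ref{lem:resolvent-of-projection}) and a Neumann series $\calR(\lambda,A+zP)=\sum_k(\calR(\lambda,zP)A)^k\calR(\lambda,zP)$, whereas you use the $2\times2$ block decomposition of $X=PX\oplus QX$ and Schur complements. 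Both are legitimate; the Schur route is perhaps more structural and makes visible exactly which block survives in the limit, while the paper's route yields a $z$-independent spectral inclusion $\sigma(A+zP)\subset\overline{B}_r(0)\cup\overline{B}_r(z)$ with $r=2\|A\|\,\|P-Q\|$ and hence the explicit error estimate recorded in Remark~\ref{rem:uniform-convergence-and-decay-rate}, which your asymptotic $O(1/|z|)$ bookkeeping would not produce without extra effort.

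There is one genuine (though fixable) flaw: the radii $r,\rho>\|A\|$ are not large enough on a general Banach space. You need $\Gamma_1$ to enclose $\sigma(QAQ)$, and the spectral radius of $QAQ$ can strictly exceed $\|A\|$ when $\|Q\|>1$ (e.g., take $X=(\bbC^2,\|\cdot\|_\infty)$ with an oblique projection $Q$ of norm $>1$ and $A$ a suitable nilpotent, so that $\|QAQ\|>\|A\|$). Similarly, the plain Neumann-series bound $\|\calR(\lambda,zP)A\|<1$ outside the two disks requires, by Lemma~\ref{lem:resolvent-of-projection}(b), radii of at least $2\|A\|\,\|P-Q\|\ge2\|A\|$ (since $(P-Q)^2=I$ forces $\|P-Q\|\ge1$), not merely $\|A\|$. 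The same care is needed to see that the Schur complements $S(\lambda)$ on $\Gamma_1$ and $S'(\lambda)$ on $\Gamma_2$ stay invertible. The remedy is exactly the paper's choice of constants: take the contour radius $R$ large enough that $B_R(0)$ contains $\sigma(QAQ)$ \emph{and} $R>2\|A\|\,\|P-Q\|$; with that adjustment your argument goes through.
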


The proof of Theorem~\ref{thm:main-theorem}, which we present in Section~\ref{section:proof-of-main-result}, uses only elementary methods, but it requires a careful analysis of the spectral properties of $A+zP$.

In fact we will obtain a bit more information about the convergence in the above theorem, including an explicit estimate (for undefined notation we refer to the end of the introduction):

\begin{remark} \label{rem:uniform-convergence-and-decay-rate}
	For $t$ in any fixed compact subset of $(0,\infty)$, the convergence in Theorem~\ref{thm:main-theorem} has at least a linear rate. 
	
	More precisely, the following holds: Let $0 < T_1 < T_2$ and define $R := 2(\|A\| + \delta) \|P-Q\|$, where $\delta > 0$ is any number sufficiently large to ensure that $R$ is strictly larger than the spectral radius of $QAQ$. Then for every $z \in \bbC$ with $\re z < -2R$ and all $t \in [T_1,T_2]$ we have
	\begin{align*}
		\|e^{t(A+zP)} - e^{tQAQ}Q\| \le C_1 e^{T_1\re z} + C_2 \frac{1}{|z| - R},
	\end{align*}
	where $C_2 := Re^{T_2R} \, \frac{\|A\| + \delta}{\delta} \|P\| \, \sup_{|\lambda| = R} \|I + AQ \, \calR(\lambda,QAQ)\|$ and $C_1 := \frac{R e^{T_1R}}{\delta}$; here, $\calR(\lambda,QAQ)$ denotes the resolvent of $QAQ$ at $\lambda$. Note that $C_2$ can be further estimated by means of the Neumann series if we chose $\delta > 0$ sufficiently large such that even $R > \|QAQ\|$.
\end{remark}

A few further remarks are in order.

\begin{remarks}
	(a) Using complexifications one immediately obtains an analogues result for real Banach spaces.
	
	(b) The operator family $(e^{tQAQ}Q)_{t \ge 0}$ is a (norm continuous) $C_0$-semi\-group on $QX = \ker P$. Hence, the semigroup $(e^{t(A+zP)})_{t \ge 0}$ converges to $0$ on the range of $P$ and to a semigroup with generator $QAQ$ on the kernel of $P$.
	
	(c) Using the power series expansion of the exponential function one obtains several ways to write down the limit semigroup; in fact, one has
	\begin{align*}
		e^{tQAQ}Q = e^{tQA}Q = Qe^{tAQ} = Qe^{tQAQ} = Qe^{tQAQ}Q
	\end{align*}
	for every $t \in (0,\infty)$.
	
	(d) The assertion of Theorem~\ref{thm:main-theorem} is obviously false for $t \le 0$; just consider $X = \bbC$, $A = 0$ and $P = 1$ to see this.
	
	(e) A glance at the proof of Theorem~\ref{thm:main-theorem} in the next section reveals that one can show similar results for analytic functions $f$ other than $\exp$, provided that $f$ satisfies certain decay properties. Since our focus is on operator semigroups, we shall not discuss this in detail.
\end{remarks}

It seems to be unclear what happens to the assertion of Theorem~\ref{thm:main-theorem} if we consider $C_0$-semigroups with \emph{unbounded} generator $A$. Of course one expects that some additional conditions are necessary in this case, since otherwise it might happen that $QAQ$ has only a very small domain which need not be dense in the range of $Q$; moreover, $QAQ$ might not even be closed. Concerning the related Theorem~\ref{thm:matolcsi-shvidkoy}, we also point out that the strong limit $\lim_{k \to \infty}\big( e^{\frac{t}{k}A}Q \big)^k$ \emph{never} exists for all bounded linear projections $Q$ on $X$ unless $A$ is bounded \cite[Theorem~2.1]{Matolcsi2004}. In any case, the proof of Theorem~\ref{thm:main-theorem} which we present below relies heavily on the boundedness of $A$. We therefore leave it as an open problem to analyse the case of unbounded generators.

Before we give a proof of our main result in the next section, let us briefly fix some notation. If $X$ is a complex Banach space, then we denote by $\calL(X)$ the space of all bounded linear operators on $X$. The spectrum of an operator $A \in \calL(X)$ is denoted by $\sigma(A)$ and for every $\lambda \in \bbC \setminus \sigma(A)$, $\calR(\lambda,A) := (\lambda-A)^{-1}$ denotes the resolvent of $A$ at $\lambda$. For $r \ge 0$ and $z \in \bbC$ we denote by $B_r(z) := \{\mu \in \bbC| \, |\mu-z| < r\}$ the open disk in $\bbC$ with center $z$ and radius $r$ and by $\overline{B}_r(z) := \{\mu \in \bbC| \, |\mu-z| \le r\}$ the closed disk in $\bbC$ with center $z$ and radius $r$.

\section{Proof of the main result} \label{section:proof-of-main-result}

In this section we prove our main result. Throughout the section we may assume that $X \not= \{0\}$ and we let $0 < T_1 < T_2$. We want to find a norm estimate for the difference $e^{t(A+zP)} - e^{tQAQ}Q$, where $t \in [T_1,T_2]$ and where $\re z \to - \infty$. To this end we employ the functional calculus for analytic functions and write
\begin{align}
		e^{t(A+zP)} - e^{tQAQ}Q = \frac{1}{2\pi i} \oint_\gamma e^{t\lambda}[\calR(\lambda,A+zP) - \calR(\lambda,QAQ)Q] \, d\lambda  \label{form:fc-representation}
\end{align}
for an appropriate path $\gamma$ which encircles both the spectra of $A+zP$ and $QAQ$. Of course we could choose $\gamma$ to be a sufficiently large circle, but this is too crude to obtain a reasonable estimate. Hence, our first task is to localize the spectrum of $A+zP$ more precisely in order to find a good choice for $\gamma$.

To do this, we first show a simple auxiliary result about the resolvent of our projection $P$.

\begin{lemma} \label{lem:resolvent-of-projection}
	Let $z \in \bbC$.
	\begin{itemize}
		\item[(a)] We have $\sigma(zP) \subset \{0,z\}$ and for every $\lambda \in \bbC \setminus \{0,z\}$, the resolvent of $zP$ is given by
			\begin{align}
				\calR(\lambda,zP) & = \frac{\lambda - zQ}{\lambda(\lambda-z)}  \label{form:resolvent-of-projection-short} \\
				& = \frac{1}{2} \big( \frac{1}{\lambda} + \frac{1}{\lambda - z} + \frac{z(P-Q)}{\lambda(\lambda-z)} \big). \label{form:resolvent-of-projection-long}
			\end{align}
		\item[(b)] Consider a non-negative parameter $\alpha \ge 0$ and the radius $r_\alpha := 2\alpha \|P-Q\|$. If $\lambda \in \bbC$ is contained in none of the closed disks $\overline{B}_{r_\alpha}(0)$ and $\overline{B}_{r_\alpha}(z)$, then the resolvent $\calR(\lambda,zP)$ fulfils the estimate
			\begin{align*}
				\alpha \|\calR(\lambda,zP)\| < 1.
			\end{align*}
	\end{itemize}
\end{lemma}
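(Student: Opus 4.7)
The plan is to treat the two parts of the lemma separately, with part (a) reducing to a direct algebraic verification and part (b) following from the second resolvent formula via the triangle inequality.

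For part (a), the inclusion $\sigma(zP) \subset \{0,z\}$ will come from the observation that $(zP)^2 - z(zP) = z^2(P^2 - P) = 0$, so $zP$ annihilates the polynomial $\lambda(\lambda - z)$; by the spectral mapping theorem for polynomials (or simply by factoring $\lambda - zP$ as $(\lambda^{-1})(\lambda-z)^{-1}$ times a polynomial in $zP$), the spectrum must lie in the zero set $\{0,z\}$. For formula (\ref{form:resolvent-of-projection-short}) I would verify it directly: using $P + Q = 1$ and $P^2 = P$, a short computation shows
\begin{align*}
(\lambda - zP)(\lambda - zQ) = \lambda^2 - z\lambda(P+Q) + z^2 PQ = \lambda(\lambda - z),
\end{align*}
because $PQ = P(1-P) = 0$. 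Dividing by $\lambda(\lambda - z)$ gives the stated formula. To pass from (\ref{form:resolvent-of-projection-short}) to (\ref{form:resolvent-of-projection-long}), I would simply substitute $Q = \tfrac{1}{2}(1 - (P-Q))$ in the numerator $\lambda - zQ$ and split the resulting expression into three summands; alternatively, one may rewrite $Q/\lambda + P/(\lambda - z)$ (the standard partial fraction form of (\ref{form:resolvent-of-projection-short})) using $P = \tfrac12(1 + (P-Q))$ and $Q = \tfrac12(1 - (P-Q))$ and then apply $\tfrac{1}{\lambda - z} - \tfrac{1}{\lambda} = \tfrac{z}{\lambda(\lambda-z)}$.

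For part (b), formula (\ref{form:resolvent-of-projection-long}) and the triangle inequality immediately give
\begin{align*}
\|\calR(\lambda,zP)\| \le \frac{1}{2}\Bigl(\frac{1}{|\lambda|} + \frac{1}{|\lambda - z|} + \frac{|z|\,\|P-Q\|}{|\lambda|\,|\lambda-z|}\Bigr).
\end{align*}
The key trick is to control the mixed term using $|z| \le |\lambda| + |\lambda - z|$, which splits it as
\begin{align*}
\frac{|z|\,\|P-Q\|}{|\lambda|\,|\lambda - z|} \le \frac{\|P-Q\|}{|\lambda - z|} + \frac{\|P-Q\|}{|\lambda|}.
\end{align*}
Combining the two estimates and noting that $\|P-Q\| \ge 1$ (which follows from $(P-Q)^2 = (2P-1)^2 = 4P^2 - 4P + 1 = I$, so $\|P-Q\|^2 \ge \|I\| = 1$; here one uses $X \neq \{0\}$), one obtains
\begin{align*}
\|\calR(\lambda,zP)\| \le \|P-Q\|\Bigl(\frac{1}{|\lambda|} + \frac{1}{|\lambda - z|}\Bigr).
\end{align*}
If $\alpha > 0$ and $\lambda$ is outside both closed disks $\overline{B}_{r_\alpha}(0)$ and $\overline{B}_{r_\alpha}(z)$, then $|\lambda|, |\lambda - z| > r_\alpha = 2\alpha\|P-Q\|$, and the right-hand side is strictly less than $\|P-Q\| \cdot \frac{1}{\alpha\|P-Q\|} = \frac{1}{\alpha}$, as required. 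The case $\alpha = 0$ is trivial.

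There is no substantial obstacle here; the only thing to watch is keeping track of the identity $\|P - Q\| \ge 1$ and the elementary estimate $|z| \le |\lambda| + |\lambda - z|$, which together make the mixed term absorb cleanly into the two reciprocal distance terms.
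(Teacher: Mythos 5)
Your proof is correct and follows essentially the same route as the paper: part (a) by direct algebraic verification, and part (b) by applying the triangle inequality to formula~(\ref{form:resolvent-of-projection-long}) together with the elementary estimate $|z| \le |\lambda| + |\lambda - z|$ and the fact that $\|P-Q\| \ge 1$. The only cosmetic difference is in part (b): the paper bounds $|\lambda||\lambda-z|$ from below by noting that one factor is at least $|z|/2$, whereas you split the mixed term into two reciprocal distances and absorb them — the two manipulations are interchangeable and rest on the same ingredients.
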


Loosely speaking, the estimate in (b) says that the resolvent $\calR(\lambda,zP)$ decreases linearly as $\lambda$ tends away from the points $0$ and $z$.

\begin{proof}[Proof of Lemma~\ref{lem:resolvent-of-projection}]
	(a) We clearly have $\sigma(zP) \subset \{0,z\}$ since $\sigma(P) \subset \{0,1\}$. The representation formulas for the resolvent can be verified by a simple computation.
	
	(b) Assume that $\lambda \not\in \overline{B}_{r_\alpha}(0) \cup \overline{B}_{r_\alpha}(z)$; then $|\lambda| > r_\alpha \ge 2\alpha$ and $|\lambda - z| > r_\alpha \ge 2\alpha$. Moreover, at least one of the numbers $|\lambda|$ and $|\lambda-z|$ is no less than $\frac{|z|}{2}$ since we have $|z| \le |\lambda| + |\lambda - z|$. Since the other one is no less than $r_\alpha = 2\alpha \|P-Q\|$, we conclude that $|\lambda| |\lambda - z| \ge \alpha |z| \|P-Q\|$. Hence we conclude from the resolvent representation formula~(\ref{form:resolvent-of-projection-long}) in (a) that
	\begin{align*}
		\alpha \|\calR(\lambda,zP)\| \le \frac{1}{2} \big( \frac{\alpha}{|\lambda|} + \frac{\alpha}{|\lambda-z|} + \frac{\alpha |z| \|P-Q\|}{|\lambda| |\lambda - z|} \big) < 1,
	\end{align*}
	which proves the assertion.
\end{proof}

Now we can achieve our first goal and obtain very precise information on the position of the spectrum $\sigma(A+zP)$; we also obtain a Neumann type series representation for the resolvent:

\begin{proposition} \label{prop:spectrum-of-shifted-generator}
	Let $z \in \bbC$ and consider the radius $r := 2 \|A\| \|P-Q\|$. 
	\begin{itemize}
		\item[(a)] The spectrum of $A + zP$ is contained in the union of the two closed disks $\overline{B}_r(0)$ and $\overline{B}_r(z)$.
		\item[(b)] If $\lambda \in \bbC$ is contained in none of the discs $\overline{B}_r(0)$ and $\overline{B}_r(z)$, then
			\begin{align*}
				\calR(\lambda,A+zP) = \sum_{k=0}^\infty \big(\calR(\lambda,zP)A\big)^k \calR(\lambda,zP),
			\end{align*}
			where the series converges absolutely with respect to the operator norm.
	\end{itemize}
	\begin{proof}
		For every $\lambda \not\in \overline{B}_r(0) \cup \overline{B}_r(z)$ the equation
		\begin{align*}
			& \lambda - (A+zP) = (\lambda - zP) - A = (\lambda - zP) \big( 1 - \calR(\lambda,zP) A \big).
		\end{align*}
		holds. If we apply Lemma~\ref{lem:resolvent-of-projection}(b) with the parameter $\alpha := \|A\|$, then we obtain the estimate $\|A\| \|\calR(\lambda,zP)\| < 1$, and thus (a) and (b) follow by employing the Neumann series.
	\end{proof}
\end{proposition}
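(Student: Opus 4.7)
The plan is to reduce the problem to a Neumann series argument, using Lemma~\ref{lem:resolvent-of-projection}(b) as the key estimate. The natural factorisation is through $\lambda - zP$, exploiting that we already control its resolvent: for any $\lambda \notin \{0,z\}$ one has the simple identity
\begin{align*}
	\lambda - (A+zP) = (\lambda - zP) - A = (\lambda - zP)\bigl(I - \calR(\lambda,zP)\, A\bigr).
\end{align*}
So if I can show that the bracket on the right is invertible, both parts of the proposition will follow at once.

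To get invertibility of $I - \calR(\lambda,zP)A$, I would invoke Lemma~\ref{lem:resolvent-of-projection}(b) with the specific choice $\alpha := \|A\|$. The lemma's radius becomes $r_\alpha = 2\|A\|\,\|P-Q\| = r$, and it tells me that whenever $\lambda \notin \overline{B}_r(0)\cup\overline{B}_r(z)$ the strict estimate $\|A\|\,\|\calR(\lambda,zP)\| < 1$ holds. Hence $\|\calR(\lambda,zP)\,A\| < 1$, and the Neumann series yields
\begin{align*}
	\bigl(I - \calR(\lambda,zP)\, A\bigr)^{-1} = \sum_{k=0}^\infty \bigl(\calR(\lambda,zP)\, A\bigr)^k,
\end{align*}
with absolute operator-norm convergence. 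This proves that $\lambda \in \rho(A+zP)$ for all such $\lambda$, which is assertion~(a). Multiplying the inverse above on the right by $\calR(\lambda,zP)$ (which undoes the left factor in the factorisation), I obtain the claimed resolvent series in~(b); absolute norm convergence is inherited from the Neumann series.

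I do not anticipate any real obstacle: all the substantive analytic work was carried out in the preceding lemma, and what remains here is mostly bookkeeping. The one point deserving a moment of care is that the factorisation puts $\lambda - zP$ on the \emph{left}, so the correct inversion formula is $\calR(\lambda,A+zP) = (I - \calR(\lambda,zP)A)^{-1}\calR(\lambda,zP)$, which produces the series in the exact order stated, namely $\sum_{k\ge 0}(\calR(\lambda,zP)A)^k\calR(\lambda,zP)$. Observe also that the two excluded points $\lambda = 0$ and $\lambda = z$ always lie inside $\overline{B}_r(0)\cup\overline{B}_r(z)$, so the restriction $\lambda \notin \{0,z\}$ needed to make sense of $\calR(\lambda,zP)$ is automatic for the $\lambda$ under consideration.
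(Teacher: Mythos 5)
Your proof is correct and follows exactly the paper's own argument: factor $\lambda - (A+zP) = (\lambda - zP)\bigl(I - \calR(\lambda,zP)A\bigr)$, apply Lemma~\ref{lem:resolvent-of-projection}(b) with $\alpha = \|A\|$ to get $\|\calR(\lambda,zP)A\| < 1$, then invert via the Neumann series. The extra observations you include (the order of factors, and that $0,z$ automatically lie in the excluded disks) are sound and merely make explicit what the paper leaves implicit.
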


Part (a) of the above proposition suggests a strategy to estimate the expression in~(\ref{form:fc-representation}): we rewrite the contour integral in~(\ref{form:fc-representation}) as a sum of contour integrals around $\overline{B}_r(0)$ and $\overline{B}_r(z)$; the path of integration around the first disk should also be sufficiently large to encircle the spectrum of $QAQ$. Let us therefore introduce the following notation:

\begin{notation}
	For the rest of this section we use the following notation:
	\begin{itemize}
		\item[(a)] Let $r := 2 \|A\| \|P-Q\|$ as in Proposition~\ref{prop:spectrum-of-shifted-generator}. 
		\item[(b)] Let $R := 2(\|A\| + \delta) \|P-Q\| > r$, where $\delta > 0$ is chosen sufficiently large to ensure that the open disk $B_R(0)$ contains the spectrum of $QAQ$. 
	\end{itemize}
\end{notation}

If $|z| > 2R$, then the circles with radius $R$ around $0$ and $z$ do not intersect; using the information about the spectrum of $A+zP$ that we obtained in Proposition~\ref{prop:spectrum-of-shifted-generator}(a), we can therefore rewrite~(\ref{form:fc-representation}) as
\begin{align}
	\begin{aligned}
	e^{t(A+zP)} - e^{tQAQ}Q \quad = \quad & \frac{1}{2\pi i} \oint_{|\lambda-z| = R} e^{t\lambda}\calR(\lambda,A+zP) \, d\lambda \\
	+ & \frac{1}{2\pi i} \oint_{|\lambda| = R} e^{t\lambda}[\calR(\lambda,A+zP) - \calR(\lambda,QAQ)Q] \, d\lambda, 
	\end{aligned}
	\label{form:fc-representation-better}
\end{align}
where the paths of integration are parametrized with positive orientation. The spectra of $A+zP$ and $QAQ$ and the paths of integration in formula~(\ref{form:fc-representation-better}) are shown in Figure~\ref{fig:countors-of-integrals}.

\begin{figure}[ht]
	\centering
	\begin{tikzpicture}[scale=.6,font=\footnotesize]
		\newcommand{\rsmall}{1.5}
		\newcommand{\rlarge}{2.5}
		\newcommand{\zright}{-10}
		\newcommand{\zup}{1}
		%
		\filldraw[lightgray]  plot[smooth, tension=.7] coordinates {(0.6*\rsmall,0*\rsmall) (0.5*\rsmall,0.6*\rsmall) (0*\rsmall,0.45*\rsmall) (-0.4*\rsmall,0.6*\rsmall) (-0.5*\rsmall,-0.4*\rsmall) (-0.1*\rsmall,-0.3*\rsmall) (0.5*\rsmall,-0.4*\rsmall) (0.6*\rsmall,0*\rsmall)};
		\draw[gray] (-0.28*\rsmall,0.4*\rsmall) -- (\zright*0.5+0.25,1.4*\rlarge);
		\draw (\zright*0.5,1.4*\rlarge) node[above] {$\sigma(A+zP)$};
		%
		\filldraw[lightgray]  plot[smooth, tension=.7] coordinates {(\zright+0.5*\rsmall,\zup+0*\rsmall) (\zright+0.5*\rsmall,\zup+0.5*\rsmall) (\zright-0.4*\rsmall,\zup+0.6*\rsmall) (\zright-0.7*\rsmall,\zup-0.3*\rsmall) (\zright-0.1*\rsmall,\zup-0.3*\rsmall) (\zright+0.5*\rsmall,\zup-0.5*\rsmall) (\zright+0.5*\rsmall,\zup+0*\rsmall)};
		\draw[gray] (\zright+0.333*\rsmall,\zup+0.333*\rsmall) -- (\zright*0.5-0.25,1.4*\rlarge);
		%
		\draw[gray] plot[smooth, tension=.7] coordinates {(-1.15*\rsmall,0.35*\rsmall) (-0.9*\rsmall,-0.15*\rsmall) (-0.2*\rsmall,-0.7*\rsmall) (0.3*\rsmall,-1.25*\rsmall) (1.1*\rsmall,-0.2*\rsmall) (0.25*\rsmall,0.15*\rsmall) (-1.15*\rsmall,0.35*\rsmall)};
		\draw[gray] (0.3*\rsmall,-0.7*\rsmall) -- (-1.4*\rlarge,-0.85*\rlarge);
		\draw (-1.4*\rlarge,-0.85*\rlarge) node[left] {$\sigma(QAQ)$};
		\draw[->] (-14,0) -- (4,0) node[below] {$\bbR$};
		\draw[->] (0,-3) -- (0,5) node[left] {$i\bbR$};
		\draw (0,0) circle (\rsmall);
		\draw[dashed] (0,0) circle (\rlarge);
		\draw (0,0) -- (0.5*\rsmall,0.866*\rsmall) node[above] {$r$};
		\draw (0,0) -- (0.866*\rlarge,0.5*\rlarge) node[right] {$R$};
		\filldraw (\zright,\zup) circle (0.05);
		\draw (\zright,\zup) node[below] {$z$};
		\draw (\zright,\zup) circle (\rsmall);
		\draw[dashed] (\zright,\zup) circle (\rlarge);
		\draw (\zright,\zup) -- (\zright -0.5*\rsmall, \zup + 0.866*\rsmall) node[above] {$r$};
		\draw (\zright,\zup) -- (\zright -0.866*\rlarge,\zup + 0.5*\rlarge) node[left] {$R$};
	\end{tikzpicture}
	\caption{Spectra of $A+zP$ and $QAQ$; the dashed circles depict the paths of integration in formula~(\ref{form:fc-representation-better}).}
	\label{fig:countors-of-integrals}
\end{figure}
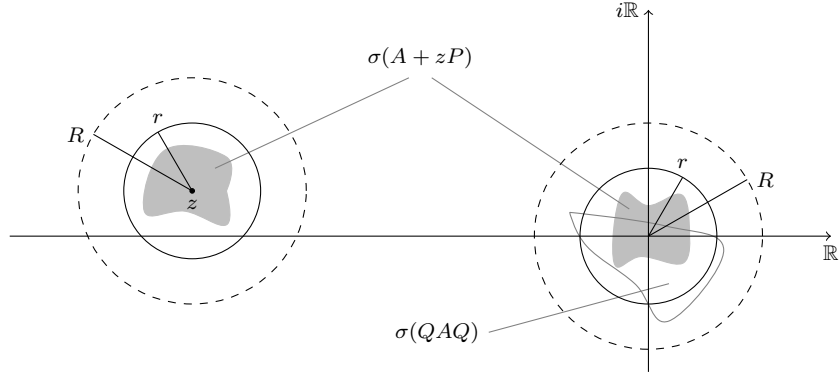

Our goal is now to estimate both integrals in formula~(\ref{form:fc-representation-better}). Let us start with the first integral, which turns out to be rather easy: since $t \ge T_1$, the exponential term within the integral ensures a fast decay as $\re z \to - \infty$ provided that we can control the resolvent. This is the content of the following proposition.

\begin{proposition} \label{prop:estimate-on-outer-cirlce}
	Let $z \in \bbC$, $\re z < -2R$.
	\begin{itemize}
		\item[(a)] We have $\|\calR(\lambda, A+zP)\| \le \frac{1}{\delta}$ for all $\lambda \in \bbC$ with $|\lambda-z| = R$.
		\item[(b)] For all $t \in [T_1,\infty)$ we have
			\begin{align*}
				\|\frac{1}{2\pi i} \oint_{|\lambda-z| = R} e^{t\lambda}\calR(\lambda,A+zP) \, d\lambda\| \le \frac{R e^{T_1(\re z + R)}}{\delta}.
			\end{align*}
	\end{itemize}
	\begin{proof}
		(a) To prove (a) we do not really need that $\re z < -2R$ but only that the disks $\overline{B}_R(0)$ and $\overline{B}_R(z)$ do not intersect. Let $\lambda \in \bbC$ with $|\lambda - z| = R$. In Proposition~\ref{prop:spectrum-of-shifted-generator}(b) we proved the resolvent representation formula 
		\begin{align*}
			\calR(\lambda,A+zP) = \sum_{k=0}^\infty \big(\calR(\lambda,zP)A\big)^k \calR(\lambda,zP).
		\end{align*}
		From Lemma~\ref{lem:resolvent-of-projection}(b) we obtain (with $\alpha := \|A\| + \delta$ and by approximating the circle $\partial B_{r_\alpha}(z)$ from the outside) the estimate $(\|A\| + \delta) \|\calR(\lambda,zP)\| \le 1$. Plugging this into the above representation formula for $\calR(\lambda,A+zP)$ we compute
		\begin{align*}
			\|\calR(\lambda,A+zP)\| \le \sum_{k=0}^\infty \big( \frac{\|A\|}{\|A\| + \delta} \big)^k \frac{1}{\|A\| + \delta} = \frac{1}{\delta}.
		\end{align*}
		
		(b) Assertion (b) readily follows from (a) since $\re \lambda \le \re z + R < 0$ for all $\lambda$ in the path of integration.
	\end{proof}
\end{proposition}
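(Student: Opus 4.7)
The plan is to derive part (a) from the Neumann-type series for $\calR(\lambda, A+zP)$ established in Proposition~\ref{prop:spectrum-of-shifted-generator}(b), bounding each term uniformly via Lemma~\ref{lem:resolvent-of-projection}(b); part (b) will then follow from a standard ML estimate applied to the contour integral.

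For part (a), I would first observe that the hypothesis $\re z < -2R$ gives $|z| > 2R$, so the closed disks $\overline{B}_R(0)$ and $\overline{B}_R(z)$ are disjoint, and any $\lambda$ with $|\lambda - z| = R$ lies outside $\overline{B}_R(0)$. The crucial point is that the radius $R$ was defined exactly so that $R = 2(\|A\|+\delta)\|P-Q\| = r_\alpha$ with $\alpha := \|A\| + \delta$. Lemma~\ref{lem:resolvent-of-projection}(b) then supplies $\alpha\|\calR(\mu, zP)\| < 1$ for every $\mu$ strictly outside $\overline{B}_R(z)$; since our $\lambda$ lies on the boundary circle, I would pass to the limit from the exterior, using continuity of the resolvent, to conclude $(\|A\|+\delta)\|\calR(\lambda, zP)\| \le 1$. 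Substituting into the Neumann series yields the geometric bound
\begin{align*}
\|\calR(\lambda, A+zP)\| \le \sum_{k=0}^\infty \left(\frac{\|A\|}{\|A\|+\delta}\right)^k \frac{1}{\|A\|+\delta} = \frac{1}{\delta},
\end{align*}
which is the claimed estimate.

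For part (b), I would apply the ML estimate: the circle has length $2\pi R$, on the path $\re \lambda \le \re z + R$ so that $|e^{t\lambda}| \le e^{t(\re z + R)}$, and part (a) controls the resolvent by $1/\delta$. Together these produce a bound of $\frac{R e^{t(\re z + R)}}{\delta}$ for the norm of the integral. Since $\re z + R < -R < 0$, the exponential is decreasing in $t$, so for $t \ge T_1$ the bound is dominated by the stated value $\frac{R e^{T_1(\re z + R)}}{\delta}$.

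The only mildly delicate step is the boundary-case application of Lemma~\ref{lem:resolvent-of-projection}(b) in part (a), which is resolved by the continuity argument above; everything else is a routine combination of the Neumann series with the trivial length-times-maximum estimate for contour integrals.
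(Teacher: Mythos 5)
Your proposal is correct and follows essentially the same route as the paper: both invoke the Neumann series from Proposition~\ref{prop:spectrum-of-shifted-generator}(b), apply Lemma~\ref{lem:resolvent-of-projection}(b) with $\alpha = \|A\|+\delta$ (handling the boundary circle by a limiting argument, which you make slightly more explicit), and then finish part (b) with the standard length-times-supremum estimate using $\re\lambda \le \re z + R < 0$ on the contour.
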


Part (b) of the above proposition shows that the first integral in~(\ref{form:fc-representation-better}) exhibits the decay rate claimed in Remark~\ref{rem:uniform-convergence-and-decay-rate}. It therefore remains to consider the second integral. Here, the exponential term is bounded below and above, so we have to show that the difference $\calR(\lambda,A+zP) - \calR(\lambda,QAQ)Q$ converges to $0$ as $\re z \to -\infty$. To do this, we represent the difference of both resolvents as a bounded multiple of $\calR(\lambda,A+zP)P$; this latter term can then easily be seen to converge to $0$.

\begin{proposition} \label{prop:resovent-estimate-on-inner-circle}
	Let $z \in \bbC$, $|z| > 2R$.
	\begin{itemize}
		\item[(a)] There is a bounded function $M: \partial B_R(0) \to \calL(X)$, not depending on $z$, such that
			\begin{align*}
				\calR(\lambda,A+zP) - \calR(\lambda,QAQ)Q = \calR(\lambda,A+zP) P \; M(\lambda)
			\end{align*}
			for all $\lambda$ with $|\lambda| = R$.
		\item[(b)] We have $\|\calR(\lambda,A+zP)P\| \le \frac{\|A\| + \delta}{\delta} \frac{\|P\|}{|\lambda-z|}$ for all $\lambda$ with $|\lambda| = R$.
	\end{itemize}
	\begin{proof}
		(a) Using that $Q$ commutes with $\calR(\lambda,QAQ)$, we can verify by a brief computation that
		\begin{align*}
			\calR(\lambda,A+zP) -  \calR(\lambda,QAQ)Q = \calR(\lambda,A+zP) P \; \big(  I + AQ \, \calR(\lambda,QAQ) \big)
		\end{align*}
		for all $\lambda$ which are contained in the resolvent sets of both $A+zP$ and $QAQ$. Hence, we simply have to define $M(\lambda) := I + AQ \, \calR(\lambda,QAQ)$ for all $\lambda \in \partial B_R(0)$.
		
		(b) Let $|\lambda|= R$. If we choose $\alpha = \|A\| + \delta$ in Lemma~\ref{lem:resolvent-of-projection}(b) (and approximate the circle $\partial B_{r_\alpha}(0)$ from the outside) we obtain $\|\calR(\lambda,zP)\| \le \frac{1}{\|A\| + \delta}$. Plugging this into the resolvent representation formula from Proposition~\ref{prop:spectrum-of-shifted-generator}(b) yields
		\begin{align*}
			\|\calR(\lambda,A+zP)P\| \le \frac{\|A\| + \delta}{\delta} \|\calR(\lambda,zP)P\|.
		\end{align*}
		However, using formula~(\ref{form:resolvent-of-projection-short}) in Lemma~\ref{lem:resolvent-of-projection}(a), we can easily see that the operator $\calR(\lambda,zP)P$ coincides with $\frac{P}{\lambda-z}$. This proves the asserted estimate.
	\end{proof}
\end{proposition}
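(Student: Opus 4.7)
The plan is to treat (a) and (b) separately. For (a) the strategy is to guess a bounded, $z$-independent function $M(\lambda)$ which witnesses the asserted factorization and then verify the identity by a direct algebraic computation. For (b) we simply plug the Neumann series representation of $\calR(\lambda,A+zP)$ from Proposition~\ref{prop:spectrum-of-shifted-generator}(b) into $\calR(\lambda,A+zP)P$ and estimate term by term, using that $\calR(\lambda,zP)P$ simplifies to a very transparent expression.

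For part (a) the natural candidate is $M(\lambda) := I + AQ\,\calR(\lambda,QAQ)$. It is manifestly independent of $z$, and since our choice of $\delta$ places $\partial B_R(0)$ in $\rho(QAQ)$, the map $\lambda \mapsto M(\lambda)$ is continuous (indeed analytic) on the compact set $\partial B_R(0)$ and hence bounded there. To verify the identity
\[
  \calR(\lambda,A+zP) - \calR(\lambda,QAQ)Q = \calR(\lambda,A+zP)\,P\,M(\lambda),
\]
I would multiply both sides from the left by $\lambda - (A+zP)$ and show that each side collapses to $P\,M(\lambda) = P + PAQ\,\calR(\lambda,QAQ)$. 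The term involving $zP\,\calR(\lambda,QAQ)Q$ disappears because $Q$ commutes with $\calR(\lambda,QAQ)$ and $PQ = 0$; what remains is a short calculation that uses the identity $\lambda\,\calR(\lambda,QAQ) = I + QAQ\,\calR(\lambda,QAQ)$ together with the decomposition $A = PA + QA$.

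For part (b), Proposition~\ref{prop:spectrum-of-shifted-generator}(b) gives
\[
  \calR(\lambda,A+zP)P \;=\; \sum_{k=0}^\infty \bigl(\calR(\lambda,zP)\,A\bigr)^k\,\calR(\lambda,zP)\,P.
\]
From formula~(\ref{form:resolvent-of-projection-short}) in Lemma~\ref{lem:resolvent-of-projection}(a) together with $QP = 0$ one reads off $\calR(\lambda,zP)P = P/(\lambda - z)$; and Lemma~\ref{lem:resolvent-of-projection}(b) with $\alpha := \|A\|+\delta$, applied in the limit as one approaches $\partial B_{r_\alpha}(0) = \partial B_R(0)$ from the outside (just as in the proof of Proposition~\ref{prop:estimate-on-outer-cirlce}(a)), gives $(\|A\|+\delta)\,\|\calR(\lambda,zP)\| \le 1$ on $\partial B_R(0)$. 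Taking norms term by term and summing the resulting geometric series in $\|A\|/(\|A\|+\delta)$ yields the claimed bound with constant $(\|A\|+\delta)/\delta$.

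The only genuine obstacle is the identity in (a): once the correct form of $M(\lambda)$ is written down its verification is only a few lines, but arriving at that form in the first place requires exploiting both $PQ = QP = 0$ and the commutation of $Q$ with $\calR(\lambda,QAQ)$ from the outset. Part (b) is then a routine geometric-series estimate directly modelled on the computation in Proposition~\ref{prop:estimate-on-outer-cirlce}.
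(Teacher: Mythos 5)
Your proposal is correct and follows essentially the same route as the paper: you identify the same operator $M(\lambda) = I + AQ\,\calR(\lambda,QAQ)$ and verify the factorization in (a) via a direct algebraic computation (multiplying by $\lambda - (A+zP)$ and using $PQ=0$ together with the commutation of $Q$ with $\calR(\lambda,QAQ)$), and for (b) you combine the Neumann series from Proposition~\ref{prop:spectrum-of-shifted-generator}(b) with the identity $\calR(\lambda,zP)P = P/(\lambda-z)$ and the bound from Lemma~\ref{lem:resolvent-of-projection}(b) at $\alpha = \|A\|+\delta$, exactly as in the paper. The only difference is that the paper leaves the verification in (a) as a ``brief computation,'' whereas you spell out a concrete way to carry it out.
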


As a consequence we obtain the desired estimate for the second integral in formula~(\ref{form:fc-representation-better}):

\begin{proposition} \label{prop:integral-estimate-on-inner-circle}
	There is a number $C > 0$, independent of $z$, such that 
	\begin{align*}
		\| \frac{1}{2\pi i} \oint_{|\lambda| = R} e^{t\lambda}[\calR(\lambda,A+zP) - \calR(\lambda,QAQ) & Q] \, d\lambda \| \le \\
		& \le C \, Re^{T_2R} \, \frac{\|A\| + \delta}{\delta} \frac{\|P\|}{|z| - R}.
	\end{align*}
	for all $t \in [-T_2,T_2]$ and for all $z \in \bbC$ with $|z| > 2R$.
	\begin{proof}
		The assertion follows immediately from Proposition~\ref{prop:resovent-estimate-on-inner-circle} if we define $C = \sup_{|\lambda| = R} \|M(\lambda)\|$. Note that if $M$ is chosen as in the proof of Proposition~\ref{prop:resovent-estimate-on-inner-circle}, then $C$ has the value $C = \sup_{|\lambda| = R} \|I + AQ \, \calR(\lambda,QAQ)\|$.
	\end{proof}
\end{proposition}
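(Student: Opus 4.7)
The plan is to combine the two parts of Proposition~\ref{prop:resovent-estimate-on-inner-circle} with the standard $ML$-estimate for contour integrals. Substituting part (a) into the integrand rewrites it as $e^{t\lambda}\,\calR(\lambda,A+zP)\,P\,M(\lambda)$, so the goal reduces to bounding the norm of this product uniformly on the circle $\partial B_R(0)$ and then multiplying by the arc length.

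Concretely, I would estimate each factor separately on $|\lambda| = R$. For $e^{t\lambda}$: since $|\lambda| = R$ and $|t| \le T_2$, the elementary bound $|e^{t\lambda}| \le e^{|t|\,|\lambda|} \le e^{T_2 R}$ holds. For $\calR(\lambda,A+zP)\,P$: Proposition~\ref{prop:resovent-estimate-on-inner-circle}(b) gives the bound $\tfrac{\|A\|+\delta}{\delta}\,\tfrac{\|P\|}{|\lambda-z|}$, and since $|\lambda| = R$ and $|z| > 2R$, the triangle inequality yields $|\lambda - z| \ge |z| - R > 0$, so this factor is at most $\tfrac{\|A\|+\delta}{\delta}\,\tfrac{\|P\|}{|z|-R}$. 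For $M(\lambda)$: the map $\lambda \mapsto M(\lambda)$ in the proof of Proposition~\ref{prop:resovent-estimate-on-inner-circle}(a) is continuous on the compact circle $\partial B_R(0)$ (being built from $\calR(\lambda,QAQ)$, whose singularities lie inside $B_R(0)$ by choice of $\delta$), so the supremum $C := \sup_{|\lambda| = R}\|M(\lambda)\|$ is finite, and crucially it is independent of $z$ since $M$ itself does not depend on $z$.

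Finally, I would apply the $ML$-estimate: the contour has length $2\pi R$, so
\begin{align*}
    \Bigl\| \tfrac{1}{2\pi i} \oint_{|\lambda|=R} e^{t\lambda}[\calR(\lambda,A+zP) - \calR(\lambda,QAQ)Q]\,d\lambda \Bigr\|
    \le \tfrac{1}{2\pi}\cdot 2\pi R \cdot e^{T_2 R}\cdot \tfrac{\|A\|+\delta}{\delta}\cdot \tfrac{\|P\|}{|z|-R}\cdot C,
\end{align*}
which is precisely the claimed estimate. The identification $C = \sup_{|\lambda|=R}\|I + AQ\,\calR(\lambda,QAQ)\|$ is immediate from the explicit form of $M$ given in the proof of Proposition~\ref{prop:resovent-estimate-on-inner-circle}(a).

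There is no real obstacle here: all the work has been done in the preceding propositions. The only point that deserves a moment of care is the finiteness of $C$, which rests on the fact that $\delta$ was chosen to place $\sigma(QAQ)$ strictly inside $B_R(0)$, so that $\calR(\,\cdot\,,QAQ)$ is holomorphic, hence bounded, on the compact circle of integration.
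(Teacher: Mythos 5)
Your proposal is correct and follows exactly the route the paper takes (which the paper states tersely as ``follows immediately from Proposition~\ref{prop:resovent-estimate-on-inner-circle}''): substitute the factorization from part (a), bound $e^{t\lambda}$, $\calR(\lambda,A+zP)P$, and $M(\lambda)$ separately on the circle, and apply the $ML$-estimate with arc length $2\pi R$. The only minor redundancy is your argument for finiteness of $C$ via continuity of $M$; the paper's Proposition~\ref{prop:resovent-estimate-on-inner-circle}(a) already asserts boundedness of $M$ directly.
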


This completes the proof of our main theorem, and the choice of the constant $C$ in the above proof also gives us the estimate claimed in Remark~\ref{rem:uniform-convergence-and-decay-rate}.

It is interesting to note that the convergence of the first integral in~(\ref{form:fc-representation-better}) is due to the decay of the exponential function as $\re z \to -\infty$, while the convergence of the second integral only relies on the decay of the difference $\calR(\lambda,A+zP) - \calR(\lambda,QAQ) Q$ as $|z| \to \infty$.

\section{Operator norm convergence in Matolcsi's and Shvidkoy's Theorem} \label{section:lifting-to-operator-norm-convergence}

In this final section we briefly demonstrate that in Theorem~\ref{thm:matolcsi-shvidkoy} the convergence happens in fact with respect to the operator norm. At first glance, one might expect that we have to reprove the theorem, possibly by another method or with better estimates, to obtain this result. But in fact, things are much easier: we will use a simple lifting argument to derive operator norm convergence from the fact that we already know about the strong convergence. By the way, the theorem of course holds for negative times $t$, too.

\begin{theorem} \label{thm:lifting-to-operator-norm-convergence}
	Let $A$ be a bounded linear operator and $Q$ a bounded linear projection on a Banach space $X$. For all $t \in \bbR$ we have
	\begin{align*}
		\big( e^{\frac{t}{k}A} Q \big)^k \to e^{tQAQ}Q \quad \text{as } k \to \infty
	\end{align*}
	with respect to the operator norm; moreover, the convergence is uniform with respect to $t$ in bounded subsets of $\bbR$.
	\begin{proof}
		First note that Theorem~\ref{thm:matolcsi-shvidkoy} obviously remains true for $t \in \bbR$ (consider the negative generator $-A$ to handle the case of negative times).
		
		Denote by $\overline{B}_X$ the closed unit ball in $X$ and let $\hat X := \ell^\infty(\overline{B}_X;X)$ be the space of all bounded maps from $\overline{B}_X$ to $X$. Clearly, $\hat X$ is a Banach space when endowed with the supremum norm. For every $B \in \calL(X)$, define $\hat B \in \calL(\hat X)$ by $\hat B(y_x) := (By_x)$ for all $(y_x) = (y_x)_{x \in \overline{B}_X} \in \hat X$; the mapping
		\begin{align*}
			\calL(X) \to \calL(\hat X), \quad B \mapsto \hat B
		\end{align*}
		is an isometric unital Banach algebra homomorphism. In particular, $\hat Q$ is a projection on $\hat X$.
		
		Fix $T > 0$ and let $\hat x: = (x)_{x \in \overline{B}_X} \in \hat X$ be the identity map from $\overline{B}_X$ to $X$. For every $B \in \calL(X)$ we have $\|B\| = \|\hat B\hat x\|$. Hence,
		\begin{align*}
			& \sup_{t \in [-T,T]} \| \big(e^{\frac{t}{k}A}Q\big)^k - e^{tQAQ}Q \| = \sup_{t \in [-T,T]} \|\big( e^{\frac{t}{k}\hat A} \hat Q\big)^k \hat x- e^{t\hat Q \hat A\hat Q}\hat Q \hat x\|,
		\end{align*}
		and the latter term converges to $0$ as $k \to \infty$ according to Theorem~\ref{thm:matolcsi-shvidkoy} (respectively, its version for $t \in \bbR$).
	\end{proof}
\end{theorem}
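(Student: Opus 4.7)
The plan is to derive operator norm convergence from the already-known strong convergence by embedding $\calL(X)$ into the operators on a larger Banach space, chosen so that operator norms coincide with the norm of the action on a single distinguished vector. First I would note that Theorem~\ref{thm:matolcsi-shvidkoy} extends at once to $t \in \bbR$: applying its $t \ge 0$ version to the bounded generator $-A$ in place of $A$ handles the negative-time case, since $e^{(t/k)A}$ and $e^{tQAQ}$ enter the formula through $(-t)$ in the auxiliary statement. So I may freely assume strong uniform convergence on every $[-T,T]$.

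The key construction is the following lifting. Let $\overline{B}_X$ denote the closed unit ball of $X$ and set $\hat X := \ell^\infty(\overline{B}_X; X)$, equipped with the supremum norm; this is again a Banach space. I would define the pointwise lift $B \mapsto \hat B$ by $(\hat B(y_x))_{x \in \overline{B}_X} := (By_x)_{x \in \overline{B}_X}$ and check that it is an isometric unital Banach algebra homomorphism from $\calL(X)$ into $\calL(\hat X)$. Being isometric and multiplicative, it automatically respects absolutely convergent power series, so $\widehat{e^{sB}} = e^{s\hat B}$ and $\widehat{BC} = \hat B\, \hat C$ for all $s \in \bbC$ and all $B, C \in \calL(X)$; in particular $\hat Q$ is a projection. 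The second ingredient is the distinguished element $\hat x := (x)_{x \in \overline{B}_X} \in \hat X$, the identity embedding of $\overline{B}_X$ into $X$. For every $B \in \calL(X)$ one then has
\begin{align*}
	\|\hat B \hat x\|_\infty = \sup_{x \in \overline{B}_X}\|Bx\| = \|B\|,
\end{align*}
which is the bridge between operator norms on $X$ and vector norms in $\hat X$.

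With these ingredients the argument writes itself. I would apply Theorem~\ref{thm:matolcsi-shvidkoy} (in its $\bbR$-version) to the bounded operator $\hat A$, the projection $\hat Q$, and the vector $\hat x$ on $\hat X$, obtaining that $\bigl\|\bigl(e^{(t/k)\hat A}\hat Q\bigr)^k \hat x - e^{t\hat Q \hat A \hat Q}\hat Q \hat x\bigr\|_\infty \to 0$ as $k \to \infty$, uniformly in $t$ on any bounded subset of $\bbR$. Because $B \mapsto \hat B$ is an algebra homomorphism, the vector on the left equals $\widehat{(e^{(t/k)A}Q)^k - e^{tQAQ}Q}\,\hat x$, and the identity $\|\hat C \hat x\|_\infty = \|C\|$ converts the above into the desired uniform operator norm convergence on $[-T,T]$. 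The only point to verify—really bookkeeping rather than an obstacle—is that the lift commutes with the operator exponential, which is immediate from its being an isometric unital Banach algebra homomorphism acting on a norm-convergent power series.
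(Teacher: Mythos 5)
Your proposal is correct and follows essentially the same route as the paper: lift to $\hat X = \ell^\infty(\overline{B}_X;X)$ via the isometric unital Banach algebra homomorphism $B \mapsto \hat B$, evaluate at the identity element $\hat x$, and use $\|\hat B \hat x\| = \|B\|$ to convert the known strong uniform convergence (Theorem~\ref{thm:matolcsi-shvidkoy}, extended to $t \in \bbR$) into operator norm convergence. The only difference is that you make explicit the compatibility of the lift with the exponential series, which the paper leaves implicit; this is a minor bookkeeping remark rather than a new ingredient.
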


\bibliographystyle{plain}
\bibliography{}

\begin{thebibliography}{1}

\bibitem{Arendt1993}
W.~Arendt and C.~J.~K. Batty.
\newblock Absorption semigroups and {D}irichlet boundary conditions.
\newblock {\em Math. Ann.}, 295(3):427--448, 1993.

\bibitem{Arendt1997}
W.~Arendt and M.~Ulm.
\newblock Trotter's product formula for projections.
\newblock {\em Ulmer Seminare.}, pages 394--399, 1997.

\bibitem{Exner2007}
Pavel Exner.
\newblock Unstable system dynamics: do we understand it fully?
\newblock {\em Rep. Math. Phys.}, 59(3):351--363, 2007.

\bibitem{Facchi2010}
Paolo Facchi and Marilena Ligab{\`o}.
\newblock Quantum {Z}eno effect and dynamics.
\newblock {\em J. Math. Phys.}, 51(2):022103, 16, 2010.

\bibitem{Matolcsi2003a}
M{\'a}t{\'e} Matolcsi.
\newblock On the relation of closed forms and {T}rotter's product formula.
\newblock {\em J. Funct. Anal.}, 205(2):401--413, 2003.

\bibitem{Matolcsi2004}
M{\'a}t{\'e} Matolcsi.
\newblock On quasi-contractivity of {$C_0$}-semigroups on {B}anach spaces.
\newblock {\em Arch. Math. (Basel)}, 83(4):360--363, 2004.

\bibitem{Matolcsi2003}
M{\'a}t{\'e} Matolcsi and Roman Shvidkoy.
\newblock Trotter's product formula for projections.
\newblock {\em Arch. Math. (Basel)}, 81(3):309--317, 2003.

\bibitem{Voigt1986}
J{\"u}rgen Voigt.
\newblock Absorption semigroups, their generators, and {S}chr\"odinger
  semigroups.
\newblock {\em J. Funct. Anal.}, 67(2):167--205, 1986.

\bibitem{Voigt1988}
J{\"u}rgen Voigt.
\newblock Absorption semigroups.
\newblock {\em J. Operator Theory}, 20(1):117--131, 1988.

\end{thebibliography}

\end{document}